\newtheorem{theorem}{Theorem}[section]
\newtheorem{proposition}[theorem]{Proposition}
\newtheorem{corollary}[theorem]{Corollary}
\newtheorem{problem}[theorem]{Problem}
\newtheorem{conjecture}[theorem]{Conjecture}
\newtheorem{remark}[theorem]{Remark}
\numberwithin{equation}{section}
\newcommand{\sg}[1][n]{{\mathfrak{S}_{#1}}}
\newcommand{\suchthat}{\mid}
\newcommand{\NN}{\mathbb{N}}
\newcommand{\QQ}{\mathbb{Q}}
\newcommand{\CC}{\mathbb{C}}
\newcommand{\kk}{\mathbb{K}}
\newcommand{\harm}{\mathcal{D}}
\newcommand{\HW}{\mathcal{D}W}
\newcommand{\DW}[1][\ell]{\mathcal{D}W^{(#1)}}
\newcommand{\qHW}[2][q]{\mathcal{D}#2_{#1}}
\newcommand{\steen}{\operatorname{\mathcal{S}}}
\newcommand{\qsteen}[1][q]{\mathcal{S}_{#1}}
\newcommand{\tp}[2]{\langle #1 \rangle_{#2}}
\newcommand{\xy}[2]{x^{#1}y^{#2}}
\newcommand{\e}{\varepsilon}
\newcommand{\p}{\partial}
\newcommand{\X}{X}
\newcommand{\PX}[1][\X]{\partial_{#1}}
\newcommand{\XPX}[1][\X]{\X\!\cdot\PX}
\newcommand{\mx}[1][x]{{\mathbf{#1}}}
\renewcommand{\mp}{{\boldsymbol{\partial}}}
\newcommand{\md}{{\mathbf{d}}}
\newcommand{\inflate}{\phi}
\newcommand{\Inflate}{\Phi}
\newcommand{\inflatep}{\overline\inflate}
\newcommand{\Inflatep}{\overline\Inflate}
\newcommand{\hooktwoheadrightarrow}{\hookrightarrow\hspace{-1.9ex}\rightarrow}
\newcommand{\TODO}[2][To do: ]{\textcolor{red}{\textbf{#1#2}}}
\newcommand{\INFO}[2][Info: ]{\textcolor{red}{\textbf{#1#2}}}
\newcommand{\TODO}[2][]{}
\newcommand{\INFO}[2][]{}
\definecolor{darkgreen}{rgb}{0.1,0.4,0.1}
\definecolor{darkred}{rgb}{0.4,0.1,0.1}
\definecolor{darkblue}{rgb}{0.1,0.1,0.4}
\begin{document}

\title[Deformed diagonal harmonic polynomials]{Deformed diagonal harmonic polynomials\\ for complex reflection groups}

\author{Fran\c cois Bergeron}
\address{D\'epartement de Math\'ematiques, UQAM,  
 C.P. 8888, Succ. Centre-Ville, 
 Montr\'eal,  H3C 3P8, Canada.}
\email{bergeron.francois@uqam.ca}

\author{Nicolas Borie}
\address{Univ. Paris-Sud, Laboratoire de Math\'ematiques d'Orsay,
 Orsay Cedex, F-91405; CNRS, France.}
\email{Nicolas.Borie@u-psud.fr}

\author{Nicolas M.~Thi\'ery}
\address{Univ. Paris-Sud, Laboratoire de Math\'ematiques d'Orsay,
 Orsay Cedex, F-91405; CNRS, France.}
\email{Nicolas.Thiery@u-psud.fr}


\maketitle

\begin{abstract}

  We introduce deformations of the space of (multi-diagonal) harmonic
  polynomials for any finite complex reflection group of the form
  $W=G(m,p,n)$, and give supporting evidence that this space seems to
  always be isomorphic, as a graded $W$-module, to the undeformed
  version.


\end{abstract}


\TODO{Acknowledge Maple + Sage}

\section{Introduction}

The aim of this work is to give support to an extension and a
generalization of the main conjecture of~\cite{Hivert_Thiery.SA.2002},
to the diagonal case as well as to the context of finite complex
reflection groups. This new extended conjecture is stated explicitly
in (\ref{la_conjecture}) below, after a few words concerning notations
and a description of the overall context.

Let $X$ denote a $\ell \times n$ matrix of variables
   $$X:=(\mx_1,\mx_2,\,\ldots,\, \mx_n),$$
with each of the columns $\mx_j=(x_{ij})_{1\leq i\leq \ell}$ containing $\ell$ variables. 
For any fixed $i$ (a row of $X$), we say that the variables $x_{i1},x_{i2},\ldots, x_{in}$ form a \emph{set of variables} (the $i^{\rm th}$ set),
and thus $X$ consists in $\ell$ sets of $n$ variables.
For $\md\in \NN^\ell$, we set
\begin{displaymath}
  |\md|:=d_1+d_2+\cdots +d_\ell
  \qquad \text{ and } \qquad
  \md!:=d_1!d_2!\cdots d_\ell!\, ,
\end{displaymath}
and write $\mx_j^\md$ for the column monomial of \emph{degree} $\mx_j^\md$:
$$\mx_j^\md:=\prod_{i=1}^\ell x_{ij}^{d_i}\ .$$
The ground field $\kk$ is assumed to be of characteristic zero and,
whenever needed, to contain roots of unity and/or a parameter $q$;
typically, $\kk=\CC$ or $\kk=\CC(q)$, although algebraic or
transcendental extensions of $\QQ$ are better suited for some of the
computer calculations. The parameter $q$ is called formal if it is
transcendental over $\QQ$.

Let $W$  be a complex reflection group of rank $n$. Elements of $W$ may be thought of as $n\times n$ matrices with complex entries.
The \emph{diagonal action} of $W$ on a polynomial $f(X)$ is defined, for $w \in W$, by:
\begin{equation}\label{defn_action}
  w\cdot f(X):=f(X\,w),
\end{equation}
where $X\,w$ stands for matrix multiplication. In other words, $W$
acts in a similar ``diagonal'' manner on each set of variables in $X$.
A polynomial is \emph{diagonally $W$-invariant} if
$$w\cdot f(X) =f(X),\qquad \hbox{for all}\quad w\in W.$$
We denote by $I_W^{(\ell)}$ the ideal generated by constant-term-free diagonally $W$-invariant polynomials.
 
For each of the variables $x_{ij} \in X$, there is an associated
partial derivation denoted here by $\p_{x_{ij}}$ or $\p_{ij}$ for
short. For a polynomial $f(X)$, let $f(\PX)$ stand for the
differential operator obtained by replacing variable in $\X$ by the
corresponding derivation in $\PX$.  The space $\DW$ of
\emph{diagonally $W$-harmonic polynomials} (or \emph{harmonic
  polynomials} for short) is then defined as the set of the
polynomials $g(X)$ that satisfy all of the linear partial differential
equations
\begin{equation}\label{defn_harmonic}
    f(\PX ) (g(X)) = 0,\qquad \hbox{for}\quad f(X)\in I_W^{(\ell)}\,.
\end{equation}
In the following, we first restrict ourselves to the complex
reflection groups $W=G(m,n)$, for $m,n\in \NN$, and then extend our
discussion to the subgroups $G(m,p,n)$. Recall that the
\emph{generalized symmetric group} $G(m,n)$ may be constructed as the
group of $n\times n$ matrices having exactly one non zero entry in
each row and each column, whose value is a $m$-th root of unity. Since
the cases $\ell=1$ and $W=\sg[n]$ have been extensively considered
previously (see~\cite{Bergeron.2010}), we write for short
$\HW=\DW[1]$, and $\harm_n=\harm\sg$.

The ring $\kk[X]^W$ of diagonally $W$-invariant polynomials for
$W=G(m,n)$ is generated by \emph{polarized} powersums, this is to say
the polynomials
   $$P_\md=\sum_{j=1}^n \mx_j^\md\,,$$
for $|\md|= m\,k$, with $1\leq k\leq n$.
Let us write $D_\md$ for the operator $P_\md(\PX )$:
$$D_\md=\sum_{j=1}^n \mp_j^\md\,,$$
where
$$\mp_j^\md:=\p_{1j}^{d_1}\p_{2j}^{d_2}\cdots \p_{\ell j}^{d_\ell}.$$
Then, the space $\DW$ is the intersection of the kernels of all the
operators $D_\md$, for $|\md|=m\,k$, with $1\leq k\leq n$. The space
$\DW$ is graded by (multi-)degree, and thus decomposes as a direct sum
    $$\DW=\bigoplus_{\md\in \NN^\ell} \DW_\md,$$
  of its homogeneous components of degree $\md$. Recall that $f(X)$ is homogeneous of degree $\md$, if and only if we have
     $$f(\mathbf{t}\,X)= \mathbf{t}^\md\,f(X),$$
 where $\mathbf{t}\,X$ stands for the multiplication of the matrix $X$ by the diagonal matrix
    $$\begin{pmatrix}  t_1 & \cdots &0\\
                                  \vdots &\ddots &\vdots\\
                                   0  & \cdots & t_\ell
          \end{pmatrix}.$$
The \emph{Hilbert series of the space $\DW$} is defined as
    $$\DW(\mathbf{t}):=\sum_{\md\in \NN^\ell} \dim(\DW_\md)\, \mathbf{t}^\md.$$
It is well known that, for $\ell=1$, the graded space $\HW$ is isomorphic, as a
$W$-module, to the graded regular representation of $W$. In particular,
its Hilbert series is given by the formula
$$\HW(t)=\prod_{k=1}^n \frac{t^{k\,m}-1}{t-1}\,.$$

Our specific story starts with a $q$-deformation of the polarized
powersums:
\begin{equation}\label{defn_q_P}
  P_{q,\md}:=\sum_{j=1}^n  \mx_j^\md\,(1 + q\,(x_{1j}\p_{1j} +\ldots + x_{nj}\p_{nj})).
\end{equation}
and the corresponding $q$-deformation of the operators
$D_\md=P_\md(\PX )$ above:
\begin{equation}\label{defn_q_D}
  D_{q,\md}:=   \sum_{j=1}^n (1 + q\,(x_{1j}\p_{1j} +\ldots + x_{nj}\p_{nj}))\, \mp_j^\md.
\end{equation}
An homogeneous polynomial $f(X)$ is said to be \emph{diagonally
  $W,q$-harmonic} (or $q$-harmonic for short) if
\begin{equation}
  D_{q,\md}\, f(X)=0,
\end{equation}
for all $\md\in \NN^\ell$ such that $|\md|$ is divisible by $m$.
The $W$-module of all $q$-harmonic polynomials is henceforth denoted
by $\DW_q$. Our aim here is to discuss and give support to the following:
\begin{conjecture}\label{la_conjecture}
  Take $W=G(m,n)$, $\ell\in\NN$, and $q$ formal. Then, $\DW_q$ is
  isomorphic (as a graded $W$-module) to $\DW$. In particular
  $\DW_q(\mathbf{t})=\DW(\mathbf{t})$.
\end{conjecture}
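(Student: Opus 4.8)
The plan is to recast the conjecture through apolar duality and then split it into an elementary semicontinuity bound and a single genuinely hard inequality. Equip $\kk[X]$ with the apolar pairing $\langle f,g\rangle:=\bigl(f(\PX)\,g\bigr)(0)$, for which multiplication by $x_{ij}$ is adjoint to $\p_{ij}$. Writing $E_j:=\sum_i x_{ij}\,\p_{ij}$ for the $j$-th column Euler operator, one checks that $E_j$ is self-adjoint and that $D_{q,\md}$ is exactly the apolar adjoint of $P_{q,\md}$, viewed as the order-one differential operator $\sum_j \mx_j^\md\,(1+q\,E_j)$. Hence
\[
  \DW_q=\mathcal{J}_q^{\perp},\qquad\text{where}\qquad
  \mathcal{J}_q:=\operatorname{span}\{\,P_{q,\md}(h)\suchthat h\in\kk[X],\ m\mid|\md|\,\}.
\]
At $q=0$ this recovers $\DW=\bigl(I_W^{(\ell)}\bigr)^{\perp}$, so the Hilbert series identity $\DW_q(\mathbf{t})=\DW(\mathbf{t})$ is equivalent to $\mathcal{J}_q$ having, in every multidegree $\md$, the same dimension as the classical ideal $I_W^{(\ell)}$. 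Moreover, since each $P_{q,\md}$ (and hence each $D_{q,\md}$) commutes with the $W$-action when $m\mid|\md|$ — the $q=0$ part being $W$-invariant because $P_\md\in\kk[X]^W$, and the column Euler operators being merely permuted by $W$ — the whole discussion may be run inside each $W$-isotypic component. Consequently, as I explain below, the full graded $W$-module statement reduces to the bare Hilbert series equality.

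First I would establish the easy inequality by semicontinuity. Fix a multidegree $\md$: the operators $D_{q,\md'}$ (with $m\mid|\md'|$) assemble into a single linear map out of the finite-dimensional space $\kk[X]_\md$ whose matrix entries are polynomials in $q$. Its rank is lower semicontinuous in $q$, hence maximal for formal (transcendental) $q$, so its kernel $(\DW_q)_\md$ has \emph{minimal} dimension over all specializations, in particular no larger than its value at $q=0$. This gives $\dim(\DW_q)_\md\le\dim(\DW)_\md$ for formal $q$, and, running the same argument in each isotypic block, the multiplicity of every irreducible in $(\DW_q)_\md$ is bounded above by its multiplicity in $(\DW)_\md$. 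The point is that these per-irreducible upper bounds, together with a single equality of \emph{total} dimensions, force termwise equality of all multiplicities; thus once the Hilbert series identity is proved, the graded $W$-module isomorphism follows for free.

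It remains to prove the reverse inequality $\dim(\DW_q)_\md\ge\dim(\DW)_\md$ for formal $q$, equivalently $\dim(\mathcal{J}_q)_\md\le\dim(I_W^{(\ell)})_\md$, and this is where the real work lies. I would attempt it by showing that the family $q\mapsto\DW_q$ is flat, i.e.\ that it admits a $q$-independent space of leading terms: fixing a monomial order, one would lift each classical harmonic to a $q$-harmonic with the same initial monomial, so that the initial space of $\DW_q$ coincides with that of $\DW$ for all $q$ and the dimension cannot jump. The natural first attempt — transporting $\DW$ onto $\DW_q$ by a monomial rescaling that conjugates $D_\md$ into $D_{q,\md}$ — is obstructed: since $(1+q\,E_j)\,\mp_j^\md=\mp_j^\md\,(1+q(E_j-|\md|))$, the deformation shifts the column degree by the $\md$-dependent amount $|\md|$, so no single degree-diagonal operator can intertwine the whole family $\{D_{q,\md}\}$ simultaneously. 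Controlling this coupling — the obstruction to constructing a uniform $q$-deformed basis across \emph{all} admissible $\md$ and \emph{all} $\ell$ at once — is the main obstacle, and I expect it to be the crux of any complete proof.

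Finally, I would organize the attack through two reductions that also make the conjecture tractable computationally. The passage from $W=G(m,p,n)$ to $G(m,n)$ should be handled by restriction, since the invariants of the subgroup are governed by those of the ambient group; and the dependence on $\ell$ should be tamed by polarization, the diagonally $W$-harmonic spaces in many sets of variables being determined by those in a bounded number of sets. This would reduce the flatness statement to a finite range of cases amenable to direct verification in Sage and Maple, which in turn pins down the module data ($n$ small, $\ell\le 2$) and provides the supporting evidence, while the uniform lower bound at generic $q$ remains the outstanding theoretical difficulty.
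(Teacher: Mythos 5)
The statement you are addressing is Conjecture~\ref{la_conjecture}; the paper does not prove it, and neither does your proposal. The parts of your argument that are complete --- the apolar-duality reformulation, the semicontinuity bound $\dim(\DW_q)_\md\le\dim(\DW)_\md$ for formal $q$, and the observation that these per-isotypic upper bounds reduce the graded $W$-module isomorphism to the bare Hilbert series equality --- are correct, but they coincide with what the paper already records as item~(1) of its supporting evidence (the ``classical specialization argument at $q=0$''). The reverse inequality is the entire content of the conjecture, and it is exactly where you stop: you propose a flatness/initial-term strategy, identify the obstruction (the $\md$-dependent degree shift in $(1+qE_j)\,\mp_j^\md=\mp_j^\md\,(1+q(E_j-|\md|))$ prevents any single degree-diagonal conjugation from intertwining the whole family), and then declare it ``the outstanding theoretical difficulty.'' That is an honest assessment, but it means no proof has been given.

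Two of your proposed reductions are moreover unsupported and, as stated, doubtful. Restriction from $G(m,p,n)$ to $G(m,n)$ points the wrong way for the conjecture as posed, which concerns $G(m,n)$ only; the paper treats $G(m,p,n)$ as a further extension requiring the extra generator $e_n^{m/p}$, whose $q$-analogue is itself problematic. More seriously, the claim that polarization determines the diagonally harmonic spaces in many sets of variables from those in a bounded number of sets is not known even at $q=0$: the paper itself stresses that the structure of $\mathcal{D}^{(2)}$ remains mysterious and that no Gr\"obner basis for $I_W^{(2)}$ is known even for $W=\sg$, so this reduction cannot be invoked to bring the problem into a finite, computationally verifiable range. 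What you have is a reasonable research plan whose easy half reproduces facts already in the paper; the conjecture remains open in your write-up exactly as it does there.
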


This conjecture is an extension of the main conjecture
of~\cite{Hivert_Thiery.SA.2002} (case $W=\sg[n]$, $\ell=1$), which
itself, at $q=1$, is equivalent to an original conjecture of
Wood~\cite{wood.psa.1998,wood.hpsa.2001} on the ``hit polynomials''
for the \emph{rational Steenrod algebra} $\steen:=\kk[P_{1,d}
\suchthat d\geq 1]$. Beside the extensive computer exploration and
results reported on in~\cite{Hivert_Thiery.SA.2002} for $W=\sg[n]$ and
$\ell=1$, our supporting evidence for this conjecture, includes the
following results:
\begin{enumerate}[(1)]
\item Applying a classical specialization argument at $q=0$ (see
  e.g.~\cite{Hivert_Thiery.SA.2002}), gives that $\dim \DW_q \leq \dim
  \DW$ (also homogeneous component by homogeneous
  component). Furthermore, equality holds if and only if
  Conjecture~\ref{la_conjecture} does.
\item Conjecture~\ref{la_conjecture} holds for all groups $G(m,2)$ for
  $\ell= 1$ (see Section~\ref{section.n2}), as well as for all
  $\ell$ when $m\leq 5$. For example, with $W=G(3,2)$, we get
 \begin{eqnarray*}
    \DW_q(\mathbf{t})&=&1+2\,h_1(\mathbf{t})+2\,h_2(\mathbf{t})+h_1^2(\mathbf{t})+h_3(\mathbf{t})+2\,h_2(\mathbf{t})h_1(\mathbf{t})\\
         &&\qquad +2\,h_4(\mathbf{t})+h_2^2(\mathbf{t})+3\,h_5(\mathbf{t})+2\,h_6(\mathbf{t})+h_7(\mathbf{t}).
  \end{eqnarray*}
\item Conjecture~\ref{la_conjecture} holds for all $\ell$, in the
  case $W=\sg=G(1,n)$ for $n\leq 4$. For example, we get the Hilbert
  series
  $$\mathcal{D}^{(\ell)}_{3,q}(\mathbf{t})=1+ 2\,h_1(\mathbf{t})+h_{11}(\mathbf{t})+h_2(\mathbf{t})+h_3(\mathbf{t}).$$
\item There seems to be an analogue of Conjecture~\ref{la_conjecture}
  for the subgroups $G(m,p,n)$ of $G(m,n)$ (see
  Section~\ref{section.Gmpn}); in particular,
  Conjecture~\ref{la_conjecture} holds for $n=2$ (including the
  dihedral groups $G(m,m,2)$) when $\ell=1$ (see
  Section~\ref{section.n2}), and for any $\ell$ for small values of
  $m,p,n$.
\end{enumerate}

Another interesting feature of the space $\DW_q$, is that it may be
characterized as the intersection of the kernels of a much smaller
family of operators than the set
\begin{equation}\label{famille_oper}
  \{ D_{q,\md}\ |\ \, |\md|=m\,k,\quad 1\leq k\leq n\}.
\end{equation}
Indeed, a straightforward calculation shows that the usual Lie-bracket
relation between the generators of the rational Steenrod algebra
generalize naturally:
\begin{equation}\label{formule_crochet}
  [D_{q,\md},D_{q,\mathbf{d'}}]=q(|\md|-|\mathbf{d'}|)\, D_{q,\md+\mathbf{d'}}.
\end{equation}
An efficient way to setup this calculation is to let both sides act
on the generating function for all monomials, namely the formal series
     $$\exp(Z.X)=\sum_{\md\in \NN^{\ell\times n}} \mx^\md\frac{\mx[z]^\md}{\md!},$$
 where $Z$ stands for a matrix of variables just as $X$ does, and $Z.X:=\sum_{ij} z_{ij}x_{ij}$.
It follows from~(\ref{formule_crochet}) that a polynomial is in the kernel of $D_{q,\md+\mathbf{d'}}$, whenever it lies in the kernels of both
$D_{q,\md}$ and $D_{q,\mathbf{d'}}$. From this, we can
immediately deduce that
\begin{proposition}\label{prop_crochet}
  The space of $q$-harmonic polynomials for $W=G(m,n)$ can be obtained as
  $$ \DW_q=\bigcap_{|\md|=m\ {\rm or}\ 2\,m} \mathrm{Ker}(D_{q,\md}).$$
\end{proposition}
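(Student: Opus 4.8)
The plan is to prove the proposition by a single strong induction on $k$, where $\mathbf{e}\in\NN^\ell$ ranges over degree vectors with $|\mathbf{e}|=k\,m$, using the bracket relation~(\ref{formule_crochet}) as the only engine. One inclusion is immediate: every element of $\DW_q$ lies in $\mathrm{Ker}(D_{q,\md})$ for all $\md$ with $|\md|$ divisible by $m$, in particular for $|\md|=m$ and $|\md|=2\,m$, so $\DW_q$ is contained in the right-hand intersection. Hence the content is the reverse inclusion.

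First I would record, with care about the scalar, the consequence of~(\ref{formule_crochet}) already noted in the text. If $f$ lies in $\mathrm{Ker}(D_{q,\md})\cap\mathrm{Ker}(D_{q,\mathbf{d'}})$, then applying $[D_{q,\md},D_{q,\mathbf{d'}}]$ to $f$ yields $q(|\md|-|\mathbf{d'}|)\,D_{q,\md+\mathbf{d'}}f=0$; since $q$ is formal, hence nonzero, as soon as $|\md|\neq|\mathbf{d'}|$ one may divide and conclude $D_{q,\md+\mathbf{d'}}f=0$. The distinctness of the two total degrees is the single hypothesis I must track whenever I combine kernels.

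Now fix $f$ in the right-hand intersection; I claim $D_{q,\mathbf{e}}f=0$ for every $\mathbf{e}$ with $|\mathbf{e}|=k\,m$ and $k\geq 1$, and prove it by strong induction on $k$. The cases $k=1$ and $k=2$ are exactly the defining membership of $f$ in the intersection. For $k\geq 3$, given $\mathbf{e}$ with $|\mathbf{e}|=k\,m$, I would split it componentwise as $\mathbf{e}=\md+\mathbf{d'}$ with $|\md|=m$ and $|\mathbf{d'}|=(k-1)\,m$; such a splitting exists because $m\leq|\mathbf{e}|$, so one may subtract a total of $m$ from the entries of $\mathbf{e}$ greedily while keeping both $\md$ and $\mathbf{d'}=\mathbf{e}-\md$ in $\NN^\ell$. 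Then $f\in\mathrm{Ker}(D_{q,\md})$ by the base case, and $f\in\mathrm{Ker}(D_{q,\mathbf{d'}})$ by the induction hypothesis, since $1\leq k-1<k$. Because $k\geq 3$ forces $|\md|=m\neq(k-1)\,m=|\mathbf{d'}|$, the distinctness condition from the previous paragraph holds, giving $D_{q,\mathbf{e}}f=D_{q,\md+\mathbf{d'}}f=0$ and completing the induction.

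I do not anticipate a serious obstacle: once~(\ref{formule_crochet}) is in hand the argument is a routine bracket-generation induction. The one point that genuinely needs care is the nonvanishing of the scalar $q(|\md|-|\mathbf{d'}|)$, which is why the inductive step is arranged to pair a degree-$m$ piece with a degree-$(k-1)m$ piece and why $k\geq 3$ is used; this simultaneously keeps the two total degrees distinct and keeps the larger piece within reach of the induction hypothesis. Checking that the componentwise splitting with prescribed, distinct, multiple-of-$m$ total degrees always exists is elementary, and the formality of $q$ is precisely what licenses dividing out the bracket scalar.
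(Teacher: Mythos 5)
Your proof is correct and follows essentially the same route as the paper, which simply observes that the bracket relation~(\ref{formule_crochet}) lets one generate the kernel condition for $D_{q,\md+\mathbf{d'}}$ from those for $D_{q,\md}$ and $D_{q,\mathbf{d'}}$ and declares the proposition immediate. Your version merely fills in the induction and, usefully, makes explicit the hypothesis $|\md|\neq|\mathbf{d'}|$ (and $q\neq 0$) needed to divide out the bracket scalar, a caveat the paper glosses over.
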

For example, when $\ell=1$, and as already noted
in~\cite{Hivert_Thiery.SA.2002} in the case $W=\sg$, the space
$\qHW{W}$ is defined by just two linear differential equations:
\begin{equation*}
  \qHW{W}={\mathrm Ker}(D_{q,m})\cap {\mathrm Ker}(D_{q,2\,m})\,.
\end{equation*}
Similarly, when $\ell=2$, the space $\DW[2]_q$ is the intersection of the kernels
of only five operators:
$$D_{q,(1,0)},\qquad D_{q,(2,0)},\qquad D_{q,(1,1)},\qquad D_{q,(0,1)},\quad{\rm and}\quad D_{q,(0,2)}.$$
This is striking because, assuming that Conjecture~\ref{la_conjecture}
holds, a direct calculation of this joint kernel and a specialization
at $q=0$ would yield back the famous space of diagonally harmonic
polynomials $\mathcal{D}^{(2)}$. Yet, even if the mysterious structure
of $\mathcal{D}^{(2)}$ has been extensively studied in the past 20
years (see \cite{Haiman.2002}), no nice Gr\"obner basis for the ideal
$I_W^{(2)}$ is known, even for $W=\sg$.

It is also noteworthy that systematic variations on the main conjecture in~\cite{Hivert_Thiery.SA.2002} have been extensively studied in~\cite{Bergeron_Garsia_Wallach.2010}. In an upcoming work, we plan to describe how these variations may be adapted to the context of the reflection groups considered here, including the diagonal point of view.  In particular, since there is a close tie ({\it loc. sit.}) between the case $\ell=1$, with $W=\sg[n]$, and Wood's conjecture (stated in \cite{wood.psa.1998} or \cite{wood.hpsa.2001}), we also plan to analyze how to generalize it to our new expanded context.

\section{Deformed harmonic polynomials for $G(m,p,n)$}
\label{section.Gmpn}

This section presents work in progress toward generalizing the
construction of $q$-harmonic polynomials, and
Conjecture~\ref{la_conjecture}, to all finite complex reflection
groups.
For simplicity, we restrict ourselves to a single set of variables:
namely $\ell=1$. However, computer calculations suggest that the
extension to the diagonal case is straightforward.

Recall that all but a small number of finite complex reflection groups are part of an
infinite family of natural subgroups of the generalized symmetric
groups which we consider now. For $m,n\in \NN$, let $p$ be a divisor of
$m$. Then, the complex reflection group $G(m,p,n)$ is defined as:
\begin{displaymath}
  G(m,p,n) := \{ g \in G(m,n) \suchthat \det g^{m/p} = 1 \}\,.
\end{displaymath}
In particular, setting $p=1$, we get back $G(m,n)=G(m,1,n)$.  Recall, for example,  that the
classical dihedral groups correspond to the family $G(m,m,2)$.

The invariant ring for $W=G(m,p,n)$ is obtained by adjoining
$e_n^{m/p}$ to the invariant ring of $G(m,n)$, with
$e_n=e_n(\mx):=x_1\cdots x_n$ standing for the product of the
variables. It is well known that the invariant ring $\kk[\mx]^W$ may
then be described as the graded free commutative algebra:
\begin{displaymath}
   \kk[\mx]^W=\kk[p_m,\dots,p_{(m-1)n},e_n^{m/p}]\,,
\end{displaymath}
with $\deg(p_k)=k$ and $\deg(e_n^{m/p})=nm/p$. Notice the necessary suppression of the generator $p_{mn}$
for this presentation to be free.

The choice of a canonical $q$-analogue of $e_n^{m/p}$ does not appear to be
straightforward. Indeed, and as far as we know (see the discussion
in~\cite[Section 7.1]{Hivert_Thiery.SA.2002}), there is no natural
analogue of the elementary symmetric polynomial inside the rational
Steenrod algebra $\qsteen$.
Besides, experienced gained
in~\cite{Bergeron_Garsia_Wallach.2010} suggests that (generically) any choice of
$q$-analogue would give an isomorphic space. We therefore take the simplest option, which
is to not deform $e_n^{m/p}$ at all. Hence, for $W=G(m,p,n)$, we define the \emph{$q$-deformed
  rational $W$-Steenrod algebra} as
\begin{displaymath}
  \qsteen^{W} := \kk[P_{q,m},\dots,P_{q,mn},e_n^{m/p}].
\end{displaymath}
Accordingly, we obtain the graded space $\qHW{W}$ of the
$q$-harmonic polynomials for $W=G(m,p,n)$, just as
previously. Specifically, writing $\e$ for the operator $e_n(\PX)$,
$\qHW{W}$ is the intersection of $\qHW{G(m,n)}$ and $\mathrm{Ker}
(\e^{m/p})$. A natural question here is to ask whether
Conjecture~\ref{la_conjecture} holds for $G(m,p,n)$.  We will show in
Section~\ref{section.n2} that it does for $n=2$ and $\ell=1$.

A first step to confirm the choice of $e_n^{m/p}$ would be to prove
the following conjecture.
\begin{conjecture}
  \label{conjecture.e}
  The $q$-harmonic polynomials for $G(m,1,n)$, as defined above,
  coincide with those for $G(m,n)$.
\end{conjecture}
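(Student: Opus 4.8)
The plan is to strip the statement down to a single inclusion, convert it into a concrete containment of graded subspaces via apolar duality, and then exploit an exact intertwining relation between $\e^m$ and the deformed operators.

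\emph{Reduction.} Since $p=1$ gives $e_n^{m/p}=e_n^m$, the definition yields $\qHW{G(m,1,n)}=\qHW{G(m,n)}\cap\mathrm{Ker}(\e^m)$, so Conjecture~\ref{conjecture.e} is equivalent to the single inclusion $\qHW{G(m,n)}\subseteq\mathrm{Ker}(\e^m)$: the fixed (undeformed) operator $\e^m$ must annihilate every $q$-harmonic polynomial for $G(m,n)$. At $q=0$ this is immediate, because $e_n^m$ is a genuine $G(m,n)$-invariant, hence lies in $I_W^{(1)}$, and harmonics are by definition killed by $f(\PX)$ for all $f\in I_W^{(1)}$. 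The entire difficulty is that this must persist for formal $q$, even though $e_n^m$ is not deformed while the power sums are.

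\emph{Duality reformulation.} I would equip each graded component of $\kk[X]$ with the apolar pairing $\langle f,g\rangle:=(f(\PX)g)(0)$, for which $x_i$ and $\p_i$ are mutually adjoint and $\theta_i:=x_i\p_i$ is self-adjoint. Then $D_{q,mk}^{*}=\sum_i x_i^{mk}(1+q\theta_i)$ and $(\e^m)^{*}$ is multiplication by $e_n^m$. Writing $J_q:=\sum_{k\ge 1}\mathrm{Im}(D_{q,mk}^{*})$, one has $\qHW{G(m,n)}=J_q^{\perp}$ and $\mathrm{Ker}(\e^m)=(e_n^m\,\kk[X])^{\perp}$, so the conjecture becomes, degree by degree,
\[
  e_n^m\,\kk[X]\subseteq J_q .
\]
At $q=0$, $J_0=I_W^{(1)}$ and the inclusion is Newton's identity expressing $e_n^m=e_n(x_1^m,\dots,x_n^m)$ through the power sums $p_{mk}$.

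\emph{Structural identities.} A short computation gives $[\e,D_{q,d}]=q\,D_d\,\e$, with $D_d=\sum_i\p_i^d$ the undeformed power-sum operator; iterating yields $\e^m D_{q,mk}=D_{q,mk}^{[m]}\e^m$, where $D_{q,d}^{[s]}:=\sum_i\bigl(1+q(\theta_i+s)\bigr)\p_i^d=(1+qs)\,D_{q_s,d}$ and $q_s:=q/(1+qs)$. Thus $\e^m$ carries $\qHW{G(m,n)}$ into the harmonic space $\qHW[q_m]{G(m,n)}$ for the Möbius-shifted (still formal) parameter $q_m$, and the conjecture says exactly that this degree-lowering, $W$-equivariant map is zero. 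Dually, taking adjoints and using $\theta_i(e_n^m h)=e_n^m(\theta_i+m)h$ gives the clean intertwiner
\[
  D_{q,mk}^{*}\circ(e_n^m\cdot)=(e_n^m\cdot)\circ D_{q,mk}^{[m]*},
\]
whence $e_n^m\,J_{q_m}\subseteq J_q$; this both reproduces the classical case transparently and reduces the problem at parameter $q$ to the directions complementary to $J_{q_m}$.

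\emph{Closing and the main obstacle.} The inclusion $e_n^m\,\kk[X]\subseteq J_q$ is automatic in every degree above the top degree of the harmonic space, so for each fixed $(m,n)$ only a finite window of degrees is genuinely at stake; this makes small cases a finite linear-algebra check but leaves a uniform argument to be found. I would attempt induction on $n$: the base case $n=1$ holds because $\qHW{G(m,1)}$ is concentrated in degrees $<m=mn$, so $\e^m=\p_1^m$ annihilates it outright, while the inductive step would peel off one variable by relating the harmonic spaces of $G(m,n)$ and $G(m,n-1)$. Alternatively one can try to solve $\sum_k D_{q,mk}^{*}(g_k)=e_n^m x^{\beta}$ directly over $\kk(q)$ by triangular elimination in a monomial order, using that each weight $1+q\alpha_i$ is invertible for formal $q$. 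The main obstacle is precisely that $J_q$ is only a $\kk$-subspace and not a $\kk[X]$-module, so the classical ideal-membership argument does not transport verbatim: the intertwiner only reaches $e_n^m\,J_{q_m}$, and certifying membership of the remaining vectors $e_n^m x^{\beta}$ — rather than merely bounding $\dim J_q$ by semicontinuity, which yields only the one-sided estimate of item~(1) in the introduction — is where the real work lies.
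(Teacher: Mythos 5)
You should first be aware that the statement you are trying to prove is stated in the paper as Conjecture~\ref{conjecture.e}: the paper offers \emph{no proof}, only the equivalent reformulation (no $q$-harmonic polynomial for $G(m,n)$ contains a monomial divisible by $e_n^m$), its verification for $n=2$, and agreement with the computed examples. So there is no proof in the paper to measure your attempt against, and your attempt does not close the gap either.

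That said, the parts of your argument that can be checked are correct and go somewhat beyond what the paper records. The reduction of the conjecture to the single inclusion $\qHW{G(m,n)}\subseteq\mathrm{Ker}(\e^m)$ is exactly the paper's ``more concrete condition.'' Your commutator $[\e,D_{q,d}]=q\,D_d\,\e$ and its iterate $\e^m D_{q,d}=(1+qm)\,D_{q/(1+qm),\,d}\,\e^m$ are correct and refine the paper's displayed relation $e_n(\PX)D_{q,k}=(1+q)D_{q/(1+q),k}e_n(\PX)$; the resulting picture (that $\e$ lowers the layer index while M\"obius-shifting $q$, and that the conjecture asserts the vanishing of the $m$-fold composite) is precisely the paper's conjectural layer decomposition $L_0(q)\oplus\cdots\oplus L_{m-1}(q)$ --- which is itself only conjectural, so you cannot lean on it. The apolar-duality translation to $e_n^m\,\kk[X]\subseteq J_q$ degree by degree, the verification at $q=0$ via Newton's identities, and the observation that only finitely many degrees are at stake are all sound.

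The genuine gap is the one you name yourself, and it is the whole problem: $J_q$ is not an ideal, so membership of $e_n^m x^{\beta}$ in $J_q$ does not follow from membership of $e_n^m$; the intertwiner only delivers $e_n^m J_{q_m}\subseteq J_q$, which misses exactly the complement of $J_{q_m}$, i.e.\ the dual of the harmonic space itself; your proposed induction on $n$ has a base case but no inductive step (no mechanism is given for relating $\qHW{G(m,n)}$ to $\qHW{G(m,n-1)}$, and nothing in the paper supplies one --- the paper's only variable-reduction tool, intersecting with a subring of variables, goes in the wrong direction here); and the ``triangular elimination over $\kk(q)$'' is not carried out and is not obviously triangular, since the images of the $P_{q,mk}^{*}$ overlap in complicated ways. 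What you have is a correct and useful set of reductions and identities consistent with the paper's expectations, not a proof; the statement remains open.
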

An equivalent but more concrete condition is that no $q$-harmonic
polynomial for $G(m,n)$ shall contain a monomial divisible by $e_n^m$.
This property holds for $n=2$, and all $q$-harmonic polynomials for
$\sg$ calculated in~\cite{Hivert_Thiery.SA.2002}.

In fact, we expect $\qHW{G(m,n)}$ to decompose as a direct sum of
$m$ layers $L_0(q)\oplus\cdots\oplus L_{m-1}(q)$ such that all the
elements of $L_k(q)$ are divisible by $e_n^k$ but not by $e_n^{k+1}$,
as in Figure~\ref{layers_graph}. Furthermore, $\e$ (depicted by the
gray down arrows in this figure) would be an isomorphism from $L_k(q)$
to $L_{k-1}(q/(1+q))$, the change of $q$ being due to the equation
\begin{equation}
  e_n(\PX) D_{q,k}  = (1+q) D_{\frac{q}{1+q},k} e_n(\PX)\,.
\end{equation}
In that case, the $q$-harmonic polynomials for $G(m,p,n)$ would be
given by
\begin{equation}
  \qHW{G(m,p,n)} = L_0(q)\oplus \cdots \oplus L_{m/p-1}(q)\,.
\end{equation}
For example, in Figure~\ref{layers_graph}, the $q$-harmonic
polynomials for the dihedral group $G(4,1,2)$ are given by
$\qHW{G(4,1,2)}=L_0(q)$. Similarly, $\qHW{G(4,2,2)}=L_0(q)\oplus
L_1(q)$. We expect that, in general, the space $\qHW{G(m,n)}$ consists of
$m$ copies of $\qHW{G(m,n,1)}$, that may be
constructed  by ``lifting back'' through $\e$.

This further suggests that the lack of operators commuting
appropriately with the action of the rational Steenrod algebra, as
reported in~\cite[Section 7]{Hivert_Thiery.SA.2002} can be
circumvented by allowing $q$ to change during the commutation. For
example, for $m=\ell=1$, the harmonic polynomials are usually
constructed from the Vandermonde determinant by iterative applications
of operators $\p_i$; it would be worth finding analogues of
those operators which would construct new $q$-harmonic polynomials
from $q'$-harmonic polynomials for some other $q'$.

\section{Inflating $q$-harmonic polynomials from $\sg$ to $G(m,n)$ and $G(m,m,n)$}

\TODO{Fran\c{c}ois: proofread}

In this section, we do some preliminary steps in the following direction.
\begin{problem}
  Assume that a basis of the $q$-harmonic polynomials for $\sg[n]$ is
  given. Is it possible to construct from it a basis of the
  $q$-harmonic polynomials for $G(m,n)$? for $G(m,m,n)$?
\end{problem}
Beware that the analogous problem for constructing diagonally
$q$-harmonic polynomials from $q$-harmonic polynomials is already hard
at $q=0$.


To start with, any $q$-harmonic polynomial for $W=\sg$ remains
$q$-harmonic for $G(m,n)$. It also remains $q$-harmonic for $G(m,p,n)$
as soon as Conjecture~\ref{conjecture.e} holds for $\sg$.
We now construct more $q$-harmonic polynomials by inflating those of
$\sg$. To this end, we consider the inflation algebra morphism and its
analogue (which is just a linear morphism) on the dual basis:
\begin{equation}
  \inflate_r:
  \begin{cases}
    \kk[X] & \hooktwoheadrightarrow \kk[X^r] \\
    \mx^{\md} & \mapsto \mx^{r\md}
  \end{cases}
  \qquad
  \qquad
  \inflatep_r:
  \begin{cases}
    \kk[X] & \hooktwoheadrightarrow \kk[X^r] \\
    \mx^{(\md)} & \mapsto \mx^{(r\md)}
  \end{cases}\,,
\end{equation}
where, by a slight notational abuse, $X^r$ stands for the matrix of
the $r$-th powers of the variables and
$\mx^{(\md)}:=\frac{1}{\md!}\mx^\md$.\TODO{define this notation in the introduction?}

\begin{proposition}
  \label{proposition.inflate}
  Let $W=G(m,n)$ and $r$ be a divisor of $m$. Then, the morphism
  $\inflatep_r$ restricts to a graded $\sg$-module embedding
  (resp. isomorphism if $r=m$) from $\qHW{\sg}$ to
  $\qHW[q/m]{W}\cap\kk[X^r]$, up to an appropriate $r$-scaling of
  the grading.
  The statement extends to any $G(m,p,n)$ as soon as
  Conjecture~\ref{conjecture.e} holds for $\sg$.
\end{proposition}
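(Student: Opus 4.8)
The plan is to reduce everything to a single intertwining relation between the harmonic operators and the inflation map $\inflatep_r$, and then to read off all the assertions (embedding, grading, $\sg$-equivariance, and the surjectivity giving an isomorphism when $r=m$) as formal consequences. Since $\ell=1$, write $x_1,\dots,x_n$ for the variables and $D_{q,k}=\sum_{j=1}^n(1+q\,x_j\p_j)\p_j^k$. The heart of the argument is the commutation identity
\begin{equation*}
  D_{q/r,\,rk}\circ\inflatep_r \;=\; \inflatep_r\circ D_{q,k}\,,
  \qquad k\geq 1,\ r\mid m,
\end{equation*}
viewed as maps $\kk[X]\to\kk[X^r]$. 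Because both sides and $\inflatep_r$ act columnwise, it suffices to verify it on a single variable $x_j$: using $\inflatep_r(x_j^a)=\frac{a!}{(ra)!}x_j^{ra}$ and the fact that $(1+q'\,x_j\p_j)\p_j^{rk}$ sends $x_j^{ra}$ to $(1+q'r(a-k))\frac{(ra)!}{(r(a-k))!}x_j^{r(a-k)}$, the two sides agree precisely when $q'r=q$. This is the only place where the parameter is forced to rescale, yielding $q'=q/r$ (which coincides with the stated $q/m$ exactly in the isomorphism case $r=m$).

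Granting the identity, the embedding is immediate. If $f$ is $q$-harmonic for $\sg$, then $D_{q,k}f=0$ for all $k\geq1$, so $D_{q/r,rk}(\inflatep_r f)=\inflatep_r(D_{q,k}f)=0$; since $r\mid m$, every operator $D_{q/r,m\ell}$ occurring in the $G(m,n)$ harmonicity conditions is of this form, whence $\inflatep_r f\in\qHW[q/r]{G(m,n)}$, and it lies in $\kk[X^r]$ by construction. The map $\inflatep_r$ is injective (it sends distinct monomials to nonzero scalar multiples of distinct monomials), it multiplies degree by $r$, and it commutes with the $\sg$-action permuting columns; hence it is a graded $\sg$-module embedding after the indicated $r$-scaling of the grading.

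For $r=m$ one upgrades this to an isomorphism onto $\qHW[q/m]{G(m,n)}\cap\kk[X^m]$ by checking surjectivity. Since $\inflatep_m$ is a linear bijection from $\kk[X]$ onto $\kk[X^m]$, any $g$ in the target has a unique preimage $f$. Reading the identity in reverse, $\inflatep_m(D_{q,k}f)=D_{q/m,mk}g$, and as $g$ is $(q/m)$-harmonic for $G(m,n)$ every $D_{q/m,mk}g$ vanishes; injectivity of $\inflatep_m$ then forces $D_{q,k}f=0$ for all $k$, i.e. $f\in\qHW{\sg}$. (For a proper divisor $r<m$ the image additionally lies in the kernels of the operators $D_{q/r,rk}$ with $rk$ not divisible by $m$, so one only obtains an embedding.)

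Finally, the passage to $G(m,p,n)$ reduces to controlling the extra condition $\e^{m/p}(\inflatep_r f)=0$, where $\e=e_n(\PX)$, and I expect this to be the only genuine obstacle — which is exactly why the statement is made conditional on Conjecture~\ref{conjecture.e}. That conjecture for $\sg$ is equivalent to saying that no $q$-harmonic polynomial $f$ for $\sg$ has a monomial divisible by $e_n$, i.e. $\e f=0$, so every monomial of $f$ omits some variable. Inflation preserves this property monomial by monomial, whence every monomial of $\inflatep_r f$ also omits a variable and already $\e(\inflatep_r f)=0$; a fortiori $\e^{m/p}(\inflatep_r f)=0$. Thus $\inflatep_r f$ is $q$-harmonic for $G(m,p,n)$, and both the embedding and (for $r=m$) the isomorphism statements transfer verbatim, the surjectivity argument being unchanged since the $G(m,p,n)$ harmonicity conditions contain the $G(m,n)$ ones.
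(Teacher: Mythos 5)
Your proof is correct and takes essentially the same route as the paper: your key identity $D_{q/r,\,rk}\circ\inflatep_r=\inflatep_r\circ D_{q,k}$ is exactly the paper's $\Inflatep_r(D_{q,\md})=D_{q/r,r\md}$ combined with Equation~\eqref{equation.inflate}, only verified directly on divided-power monomials instead of via the Weyl-algebra isomorphism of Remark~\ref{phi_scale}, and you merely fill in the surjectivity and $G(m,p,n)$ details that the paper compresses into ``the statements follow.'' Your observation that the deformed parameter in the image is $q/r$ (agreeing with the stated $q/m$ only when $r=m$) is consistent with the paper's own proof and correctly flags what appears to be a slip in the statement.
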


The first step toward this proposition is to define an appropriate
inflation on the $q$-rational Steenrod algebra. Note that the
operators $P_{q,k}$ live inside the subalgebra $\kk[\XPX, \X]$
of the Weyl algebra, where $\XPX$ denotes the matrix of the
Euler operators $x\p_x$ for $x\in X$. The only non-trivial brackets in
this algebra are $[x\p_x, x] = x$, for $x\in X$, from which it follows
that $[x\p_x, x^k] = k x^k$. Similarly, the operators $D_{q,k}$ live
inside the subalgebra $\kk[XPX, \PX]$, with analogue relations.
\begin{remark}
  \label{phi_scale}
  Fix $r\in \NN$. Then, the two mappings
  \begin{displaymath}
    x\p_x \mapsto 1/r(x\p_x), \quad x\mapsto x^r, \text{ for } x\in X
    \qquad \text{ and } \qquad
    x\p_x \mapsto 1/r(x\p_x), \quad \p_x\mapsto \p_x^r, \text{ for } x\in X
  \end{displaymath}
  extend respectively to algebra isomorphisms
  \begin{displaymath}
    \Inflate_r:  \kk[\XPX,\X ]\hooktwoheadrightarrow \kk[\XPX, \X ^m]
    \qquad \text{ and } \qquad
    \Inflatep_r: \kk[\XPX,\PX]\hooktwoheadrightarrow \kk[\XPX, \PX^m]\,.
  \end{displaymath}
  Furthermore, those isomorphisms are compatible with the
  action on inflated polynomials: for $f\in \kk[X]$ and $F$ in
  $\kk[\XPX, \X]$ (resp. in $\kk[\XPX,\PX]$), we have
  \begin{equation}
    \label{equation.inflate}
    \Inflate_r(F) ( \inflate_r(f) ) = \inflate_r(F(f))
    \qquad \text{ and } \qquad
    \Inflatep_r(F) ( \inflatep_r(f) ) = \inflatep_r(F(f)) \,.
  \end{equation}
\end{remark}
Using this remark, a straightforward calculation shows that:
\begin{equation}
  \Inflate_r(P_{q,\md}) = P_{q/r,r\md} \qquad \text{ and } \qquad \Inflatep_r(D_{q,\md}) = D_{q/r,r\md}\,.
\end{equation}
This readily implies that $\Inflate_m$ restricts to an isomorphism
from the $q$-rational Steenrod algebra for $\sg$ to that for
$G(m,n)$. Computer exploration suggests that the Gr\"obner basis for
the right ideal generated by the Steenrod algebra for $G(m,n)$ is
simply obtained by inflating that for $\sg$. This possibly opens the
door for controlling the leading monomials of ``$q$-hit polynomials''
for $G(m,n)$ from those for $\sg$.




Returning to our main goal, we now have all the ingredients to prove
Proposition~\ref{proposition.inflate}.\\
\begin{proof}[of Proposition~\ref{proposition.inflate}]
  Let $f\in \kk[\X]$. Then, using Equation~\eqref{equation.inflate},
  \begin{displaymath}
    D_{q/r,\, r\md}(\inflate(f)) = \Inflatep_r(D_{q,\md}) (\inflate(f)) =
    \inflate( D_{q,\md}(f) )\,.
  \end{displaymath}
  Hence $D_{q/r, r\md}(\inflate(f))=0$ if and only if
  $D_{q,\md}(f)=0$. The statements follows.
\end{proof}

\section{Singular values}

As discussed in~\cite{Hivert_Thiery.SA.2002}, the statement of
Conjecture~\ref{la_conjecture} may fail when $q$ is not a formal
parameter; in that case, $q$ is called \emph{singular}. Computer
exploration (see Table~A.3 of~\cite{Hivert_Thiery.SA.2002}) and the
complete analysis of the case $n=2$ suggested that the only such
singular values for $W=\sg$ and $\ell=1$ are negative rational numbers
of the form $-a/b$ with $a\leq n$. In~\cite{DAdderio_Moci.2010}
D'Adderio and Moci refined this statement to $a\leq n \leq b$ (with
$q=-a/b$ not necessarily reduced), and proved that all such values are
indeed singular by constructing explicit exceptional $q$-harmonic
polynomials. 

\begin{proposition}
  \label{proposition.singular}
  Let $W=G(m,n)$, $\ell$ be arbitrary, and $q=-a/b$ with $a\leq n \leq
  b$. Then, $q$ is singular.
\end{proposition}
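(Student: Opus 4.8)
The plan is to transport the explicit exceptional $q$-harmonic polynomials constructed by D'Adderio and Moci~\cite{DAdderio_Moci.2010} for $\sg$ and $\ell=1$ to the larger group $G(m,n)$ and to arbitrary $\ell$, keeping the same value $q=-a/b$. Two elementary embeddings do the transporting. First, to handle the number $\ell$ of sets of variables, I would realise a single set $x_1,\ldots,x_n$ as the first row $x_{11},\ldots,x_{1n}$ of $X$, and take $f\in\kk[x_{11},\ldots,x_{1n}]$. For $\md\in\NN^\ell$ with $|\md|=mk$, the operator $\mp_j^\md$ carries a factor $\p_{ij}^{d_i}$ with $i\geq 2$ as soon as $\md$ has support outside the first row, so $\mp_j^\md f=0$ and hence $D_{q,\md}f=0$ automatically; while for $\md=(mk,0,\ldots,0)$ one checks that $D_{q,\md}$ reduces on such $f$ to the $\ell=1$ operator $D_{q,mk}$. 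Consequently the component of the $\ell$-set space $\qDW{G(m,n)}{\ell}$ in multidegree $(d,0,\ldots,0)$ coincides, for \emph{every} value of $q$, with the degree-$d$ component of the $\ell=1$ space $\qHW{G(m,n)}$. A dimension jump at $q=-a/b$ in the $\ell=1$ case therefore produces one in the $\ell$-set case, and it suffices to treat $\ell=1$.

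For $\ell=1$, the group $G(m,n)$ is defined by the subfamily of operators $D_{q,mk}$, $1\leq k\leq n$, a subset of the operators $D_{q,k}$, $k\geq 1$, defining $\sg$; hence $\qHW{\sg}\subseteq\qHW{G(m,n)}$ for every $q$, as already noted in the text. In particular the polynomials of~\cite{DAdderio_Moci.2010}, which are $q$-harmonic for $\sg$ precisely at the singular values $q=-a/b$ with $a\leq n\leq b$, are $q$-harmonic for $G(m,n)$ at those same values. It remains to prove that they still force the Hilbert series to jump, that is, that they are \emph{exceptional} for $G(m,n)$ and not merely absorbed into its (much larger) generic harmonic space.

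This last point is the heart of the matter, and the main obstacle. Writing $h(q,d)$ and $s(q,d)$ for the dimensions of the degree-$d$ components of $\qHW{G(m,n)}$ and $\qHW{\sg}$, the inclusion gives $s(q,d)\leq h(q,d)$, and restricting the extra operators $D_{q,j}$ ($m\nmid j$) to the $G(m,n)$-harmonics of degree $d$ exhibits the difference $h(q,d)-s(q,d)$ as the rank of a $q$-dependent linear map. By upper semicontinuity of kernel dimension, $s(q,d)$ jumps at $q=-a/b$; the danger is that a simultaneous drop of this rank cancels the jump in $h(q,d)$. I would rule this out using the explicit shape of the D'Adderio--Moci polynomials, checking that they are linearly independent, modulo the generic harmonic space of $G(m,n)$, from the flat limit at $q=-a/b$ of the generic $G(m,n)$-harmonics. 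Here the inflation morphism is a useful organising principle: the isomorphism of Proposition~\ref{proposition.inflate} identifies $\qHW{\sg}$ with $\qHW[q/m]{G(m,n)}\cap\kk[X^m]$, so the inflation mechanism can only create singularities of $G(m,n)$ at the value $-a/(mb)$. The polynomials transported here sit at the genuinely different value $-a/b$ and are not of inflated type, which is exactly what should make them transverse to the generic space, hence exceptional.

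I expect the technical core to be precisely this transversality verification for $G(m,n)$; the two embeddings (over $\ell$, and over the pair $\sg\subseteq G(m,n)$) are routine. The passage to the subgroups $G(m,p,n)$ would then follow by intersecting with $\mathrm{Ker}(\e^{m/p})$ once Conjecture~\ref{conjecture.e} is available, as in the rest of the section.
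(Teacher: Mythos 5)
Your reduction from arbitrary $\ell$ to $\ell=1$ (restricting to the first row of variables) is exactly the paper's, and your starting observation that $\qHW{\sg}\subseteq\qHW{G(m,n)}$ at the \emph{same} value of $q$ is correct and is indeed noted in the paper. But the decisive step --- that the D'Adderio--Moci polynomial is not merely harmonic but \emph{exceptional} for $G(m,n)$, i.e.\ that it pushes the dimension of some homogeneous component of $\qHW{G(m,n)}$ strictly above its generic value --- is precisely the point you call ``the heart of the matter'' and then leave as a programme (``I would rule this out using\dots'', ``I expect the technical core to be precisely this transversality verification''). That is a genuine gap, not a detail: since the inclusion $\qHW{\sg}\subseteq\qHW{G(m,n)}$ holds for \emph{every} $q$, and the generic dimension of the right-hand side is much larger ($m^n\,n!$ versus $n!$), exhibiting one extra element of $\qHW{\sg}$ at $q=-a/b$ says nothing about the Hilbert series of $\qHW{G(m,n)}$ unless one shows it cannot be absorbed into the flat limit of the generic harmonics --- which is exactly the verification your proposal does not carry out.

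The paper's (sketched) proof avoids this difficulty by a different device: it inflates, sending $f(x_1,\dots,x_n)$ to $f(x_1^m,\dots,x_n^m)$, which by Proposition~\ref{proposition.inflate} is $q/m$-harmonic for $G(m,n)$. The point of inflating is that the degree is multiplied by $m$ and the D'Adderio--Moci polynomials can be taken of degree so large that the inflated polynomial lands where the generic space $\HW$ vanishes, so exceptionality is automatic and no transversality check is needed. Your observation that inflation shifts $q$ to $q/m$, and hence by itself only reaches singular values of the form $-a/(mb)$ rather than all $-a/b$ with $a\leq n\leq b$, is a perceptive and legitimate criticism of that sketch, and it explains why a same-$q$ route like yours is worth pursuing; but to make it a proof you must actually establish the linear independence modulo the limit of the generic $G(m,n)$-harmonics (for instance by isolating a degree or an isotypic component where absorption is impossible). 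As written, the proposal identifies the obstacle correctly but does not overcome it.
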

\begin{proof}[(sketch of)]
  Let $f(x_1,\dots,x_n)$ be the $q$-harmonic polynomial for $\sg[n]$
  constructed in~\cite{DAdderio_Moci.2010}. Then, as stated in
  Proposition~\ref{proposition.inflate}, $f(x_1^m,\dots,x_n^m)$ is a
  $q/m$-harmonic for $W$ of high enough degree (as
  in~\cite{DAdderio_Moci.2010}) to disprove the statement of
  Conjecture~\ref{la_conjecture}.
  Going from $\ell=1$ to $\ell$ arbitrary is then straightforward,
  since the intersection of $\DW[\ell]_q$ with the polynomial ring in
  the first set of variables is $\HW_q$.
\end{proof}

It is worth noting that, for $n=2$, and $\ell=1$ the singular values
are exactly those listed in Proposition~\ref{proposition.singular}
(see Section~\ref{section.n2}). However, at this stage, we lack
computer data to extend this to a conjecture for all $n$ and $\ell$.

\TODO{Can we argue about $G(m,p,n)$?}

\section{Complete study for $n=2$}
\label{section.n2}

\let\a=\alpha
\let\b=\beta


In this section, we prove Conjecture~\ref{la_conjecture} for any group
$W=G(m,p,2)$ in the case $\ell=1$.  We denote for short the two
variables $x$ and $y$ instead of $x_1$ and $x_2$. Naturally $\p_x$ and
$\p_y$ are the corresponding differential operators. We also introduce
the following $q$-analogue of the Pockhammer symbol $(d)_k$:
\begin{displaymath}
\tp{d}{k} :=d\,(d-1)\,\cdots\,(d-k+1)\, (1+q\,(d-k)).
\end{displaymath}
Then, for any monomial $\xy{\a}{\b}$, one has:
\begin{equation}
  \label{action_Dkq}
  D_{q,k} (\xy{\a}{b}) = \tp{\a}{k} \, \xy{\a-k}{\b} + \tp{\b}{k} \, \xy{\a}{\b-k}\,,
\end{equation}
which is well defined for any nonnegative numbers $\a$ and $\b$, since
$\tp{\a}{k}=0$ whenever $\a<k$.
\begin{remark}
  Let $W=G(m,m,2)$ be the dihedral group. Then, the space $\qHW{W}$
  coincides with $\HW$ whenever $q$ is not of the form $-1/b$ with
  $1\leq b\leq m$. A basis is given by
  \begin{equation}\label{base_diedral}
    \{ 1, x, x^2, x^3, \dots,x^{m-1}, x^m-y^m, y^{m-1}, y^{m-2}, \dots, y^2, y \}\,.
  \end{equation}
  Otherwise, the basis of $\qHW{W}$ contains additionally the
  monomials $x^{b+m}$ and $y^{b+m}$, or just the binomial $x^{2m} -
  y^{2m}$ if $b=m$.
\end{remark}
\begin{proof}
  Let $f=f(x,y)$ be an homogeneous $q$-harmonic polynomial in
  $\kk[x,y]$. It satisfies:
  \begin{displaymath}
    D_{q,m}(f) = 0, \qquad D_{q,2m}(f) =0, \qquad and \qquad \e(f)=0,
  \end{displaymath}
  where $\e=\partial_x\partial_y$. By the last equation, $f$ is of the
  form $\lambda x^d + \mu y^d$, and the two other equations rewrite
  as $(d)_k (\lambda x^{d-km} + \mu y^{d-km})$ for $k=1,2$.  The
  statement follows.
\end{proof}

\begin{proposition}
  For $W=G(m,2)$ and $q \neq -a/b, 1 \leqslant a \leqslant 2 \leqslant b$, the space
  $\qHW{W}$ is isomorphic as a graded $W$-module to
  $\HW$. In particular it is of dimension $2m^2$. A basis
  is given by
  \begin{equation}\label{la_base}
    \{\xy{\a}{\b}\}_{0 \leqslant a < m \atop 0 \leqslant b < m}
    \cup
    \{ \tp{\b+m}{m}\xy{\a+m}{\b} - \tp{\a+m}{m}\xy{\a}{\b+m} \}_{0 \leqslant \a < m \atop 0 \leqslant \b < m}\,.
  \end{equation}
\end{proposition}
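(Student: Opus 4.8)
The plan is to prove this for $W=G(m,2)$ by directly analyzing the kernel of the operators $D_{q,m}$ and $D_{q,2m}$, which by Proposition~\ref{prop_crochet} suffice to define $\qHW{W}$ when $\ell=1$. I would work monomial by monomial using the explicit action formula~\eqref{action_Dkq}, organizing the computation by fixing a total degree $d$ and studying how the operators act on the homogeneous component.

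Let me sketch the key steps. First I would fix a homogeneous degree $d$ and write a candidate polynomial $f=\sum_{\alpha+\beta=d}c_{\alpha\beta}\,\xy{\alpha}{\beta}$. Applying~\eqref{action_Dkq} with $k=m$ and $k=2m$ gives linear conditions relating the coefficients. The crucial observation is that $D_{q,m}$ shifts the $x$-degree down by $m$, so it couples monomials whose exponents differ by multiples of $m$; fixing the residues of $\alpha$ and $\beta$ modulo $m$ decomposes the problem into independent small linear systems. Within each such ``residue class'' the monomials form a short chain, and the vanishing conditions from $D_{q,m}$ and $D_{q,2m}$ become a concrete system in a handful of coefficients, governed by the Pochhammer factors $\tp{\cdot}{m}$ and $\tp{\cdot}{2m}$.

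The heart of the argument is to show that, away from the singular values $q=-a/b$ with $1\le a\le 2\le b$, these Pochhammer factors do not vanish in the relevant ranges, so the linear systems have exactly the expected rank and their solution space is spanned precisely by the two families in~\eqref{la_base}. I would verify directly that each proposed basis element is annihilated by both $D_{q,m}$ and $D_{q,2m}$: the pure monomials $\xy{\alpha}{\beta}$ with $\alpha,\beta<m$ are killed because $\tp{\alpha}{m}=\tp{\beta}{m}=0$ when the exponent is below $m$, and the binomials are engineered so that the two surviving terms from $D_{q,m}$ cancel (one checks $D_{q,2m}$ vanishes similarly using $\alpha,\beta<m$). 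Counting gives $m^2+m^2=2m^2$ elements, matching $\dim\HW=\prod_{k=1}^{2}(k m)=2m^2$; since part~(1) of the introduction guarantees $\dim\qHW{W}\le\dim\HW$, exhibiting $2m^2$ linearly independent $q$-harmonic polynomials forces equality and hence the conjecture.

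The main obstacle I anticipate is the graded $W$-module isomorphism, not merely the dimension count. Equality of Hilbert series follows from the dimension argument, but to upgrade this to an isomorphism of graded $W$-modules I would need to track the $W=G(m,2)$-action on the proposed basis and match it, degree by degree, with the known decomposition of $\HW$ into irreducibles. The natural route is to show the specialization map at $q=0$ carries the basis~\eqref{la_base} to a basis of $\HW$ compatibly with the $W$-action, so that the $q$-deformation is an isomorphism of graded $W$-modules by rigidity of the representation over the formal parameter; making this specialization argument rigorous, rather than just checking dimensions, is where the real care is required.
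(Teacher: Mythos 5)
Your verification that the $2m^2$ proposed elements are $q$-harmonic is correct and matches the paper's. The genuine gap is in the completeness step. You invoke the bound $\dim\qHW{W}\le\dim\HW$ from item~(1) of the introduction, but that bound comes from a specialization/semicontinuity argument at $q=0$ and is only valid for $q$ \emph{formal} (or generic): at a specific numerical value of $q$ the rank of the defining linear system can drop and the kernel can get \emph{larger}. This is precisely the phenomenon of singular values --- D'Adderio and Moci construct extra harmonic polynomials at such values --- so no upper bound is available a priori for numerical $q$. Since the proposition is asserted for every numerical $q$ outside the explicit set $\{-a/b:\ 1\le a\le 2\le b\}$, your dimension count proves the statement only for formal $q$ and cannot identify which numerical values are actually singular. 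The paper instead proves completeness directly: for any monomial $\xy{\alpha'}{\beta'}$ not occurring in~\eqref{la_base}, it places it in a window $\{\xy{\alpha+m}{\beta-m},\ \xy{\alpha}{\beta},\ \xy{\alpha-m}{\beta+m}\}$ of its $m$-shift chain, extracts from the vanishing of three coefficients of $D_{q,m}(h)$ and $D_{q,2m}(h)$ a $3\times 3$ homogeneous linear system in the corresponding coefficients $c_1,c_2,c_3$ of a putative harmonic $h$, and computes its determinant to be
\[
\frac{(\alpha+m)!\,(\beta+m)!}{(\alpha-m)!\,(\beta-m)!}\,\bigl(1+q(\alpha-m)\bigr)\bigl(1+q(\beta-m)\bigr)\bigl(2+q(\alpha+\beta)\bigr),
\]
whose vanishing locus is exactly the announced singular set. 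Your ``residue classes mod $m$'' decomposition is the right setup for this, but you must actually solve the resulting systems for arbitrary $q$ rather than appeal to the generic upper bound.

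A smaller point: the graded $W$-module isomorphism needs far less than the rigidity argument you sketch. Once the basis is established, its elements have the same monomial supports, degree by degree, as the classical basis obtained by specializing the coefficients at $q=0$, so the graded characters of $\qHW{W}$ and $\HW$ coincide by direct inspection.
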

\begin{proof}
  As suggested by Equation~\eqref{action_Dkq}, the implicit
  combinatorial ingredient is the length of the longest string
  $\dots,\xy{\a-m}{\b+m}, \xy{\a}{\b}, \xy{\a+m}{\b-m},\dots$ containing any
  given monomial.

  Obviously, whenever $\a<m$ and $\b<m$, the monomial $\xy{\a}{\b}$ is killed
  by both operators $D_{q,m}$ and $D_{q,2\,m}$, and is therefore
  $q$-harmonic. This gives the first $m^2$ monomials
  in~\eqref{la_base}.  Using Equation~\ref{action_Dkq}, a direct
  calculation shows that the remaining $m^2$ binomials
  \begin{displaymath}\label{formule}
    \tp{\b+m}{m} \xy{\a+m}{\b} - \tp{\a+m}{m} \xy{\a}{\b+m}
  \end{displaymath}
  for $\a<m$ and $\b<m$ are also $q$-harmonic.

  We now consider a monomial $\xy{\a'}{\b'}$ that does not appear in any
  of the $q$-harmonic polynomials of~\eqref{la_base}, and prove that
  it cannot appear in any other $q$-harmonic polynomial.  It is
  straightforward that we can choose $\a$ and $\b$ such that
  $$\xy{\a'}{\b'} \in \{ \xy{\a+m}{\b-m}, \xy{\a}{\b}, \xy{\a-m}{\b+m} \}\,.$$
  Let $h = c_1 \xy{\a+m}{\b-m} + c_2 \xy{\a}{\b} + c_3 \xy{\a-m}{\b+m} +
  \cdots $ be a $q$-harmonic polynomial. Then,
  \begin{displaymath}
    0 = D_{q,m}(h)_{|\xy{\a}{\b-m}} = c_1 \tp{\a+m}{m} + c_2 \tp{\b}{m}\,.
  \end{displaymath}
  Looking similarly at $D_{q,m}(h)_{|\xy{\a-m}{\b}}$ and $D_{q,
    2\,m}(h)_{|\xy{\a-m}{\b-m}}$, shows that the coefficients $c_1$,
  $c_2$, and $c_3$ must satisfy the following system of equations:
  \begin{displaymath}
    \begin{array}{rcrcrcc}
      \tp{\a+m}{m\phantom{2}}\,c_1 & + & \tp{\b}{m}\,c_2 & & &=& 0 \\
      & & \tp{\a}{m}\,c_2 & + & \tp{\b+m}{m\phantom{2}}\,c_3 &=& 0 \\
      \tp{\a+m}{2m}\,c_1 & & & + & \tp{\b+m}{2m}\,c_3 &=& 0 \\
    \end{array}
  \end{displaymath}
  whose determinant is:
  \begin{displaymath}
     \frac{(\a+m)!(\b+m)!}{(\a-m)!(\b-m)!}(1+q\,(\a-m))(1+q\,(\b-m))(2+q\,(\a+\b))\,.
  \end{displaymath}
  Therefore $c_1=c_2=c_3=0$ whenever $q$ is not one of the announced singular values.
\end{proof}

  \begin{figure}[!h]
    \label{layers_graph}
    \centering
    \begin{tikzpicture}[xscale=1.6]
      \tikzstyle{every node}=[fill=white,minimum size=5pt,inner sep=1pt]
      \tikzset{
        vertical/.style={->, gray, shorten >=1pt},
        rededge/.style={->, red, shorten >=1pt, very thick},
        blueedge/.style={->, blue, shorten >=1pt, very thick},
        monomials/.style={-, line width=0pt, darkgreen, shorten >=1pt}
              }

      \node at (-8, 4) {$L_0(q):$};
      \node at (-8, 6) {$L_1(q):$};
      \node at (-8, 8) {$L_2(q):$};
      \node at (-8,10) {$L_3(q):$};

      \draw (0,0) node (00) {00}
         ++ (-2,1) node (10-*) {10-$\ast$}
         ++ (-2,1) node (20-*) {20-$\ast$}
         ++ (-2,1) node (30-*) {30-$\ast$}
         ++ (-1,1) node (40-04) {40-04}
         ++ (2,-1) node (*-03) {$\ast$-03}
         ++ (2,-1) node (*-02) {$\ast$-02}
         ++ (2,-1) node (*-01) {$\ast$-01};
      \draw[blueedge] (10-*) -- (00);
      \draw[blueedge] (20-*) -- (10-*);
      \draw[blueedge] (30-*) -- (20-*);
      \draw[blueedge] (40-04) -- (30-*);
      \draw[rededge] (40-04) -- (*-03);
      \draw[rededge] (*-03) -- (*-02);
      \draw[rededge] (*-02) -- (*-01);
      \draw[rededge] (*-01) -- (00);

      \draw (0,2) node (11) {11}
         ++ (-2,1) node (21-*) {21-$\ast$}
         ++ (-2,1) node (31-*) {31-$\ast$}
         ++ (-2,1) node (41-05) {41-05}
         ++ (-1,1) node (51-15) {51-15}
         ++ (2,-1) node (50-14) {50-14}
         ++ (2,-1) node (*-13) {$\ast$-13}
         ++ (2,-1) node (*-12) {$\ast$-12};
      \draw[blueedge] (21-*) -- (11);
      \draw[blueedge] (31-*) -- (21-*);
      \draw[blueedge] (41-05) -- (31-*);
      \draw[blueedge] (51-15) -- (41-05);
      \draw[rededge] (51-15) -- (50-14);
      \draw[rededge] (50-14) -- (*-13);
      \draw[rededge] (*-13) -- (*-12);
      \draw[rededge] (*-12) -- (11);

      \draw (0,4) node (22) {22}
         ++ (-2,1) node (32-*) {32-$\ast$}
         ++ (-2,1) node (42-06) {42-06}
         ++ (-2,1) node (52-16) {52-16}
         ++ (-1,1) node (62-26) {62-26}
         ++ (2,-1) node (61-25) {61-25}
         ++ (2,-1) node (60-24) {60-24}
         ++ (2,-1) node (*-23) {$\ast$-23};
      \draw[blueedge] (32-*) -- (22);
      \draw[blueedge] (42-06) -- (32-*);
      \draw[blueedge] (52-16) -- (42-06);
      \draw[blueedge] (62-26) -- (52-16);
      \draw[rededge] (62-26) -- (61-25);
      \draw[rededge] (61-25) -- (60-24);
      \draw[rededge] (60-24) -- (*-23);
      \draw[rededge] (*-23) -- (22);

      \draw (0,6) node (33) {33}
         ++ (-2,1) node (43-07) {43-07}
         ++ (-2,1) node (53-17) {53-17}
         ++ (-2,1) node (63-27) {63-27}
         ++ (-1,1) node (73-37) {73-37}
         ++ (2,-1) node (72-36) {72-36}
         ++ (2,-1) node (71-35) {71-35}
         ++ (2,-1) node (70-34) {70-34};
      \draw[blueedge] (43-07) -- (33);
      \draw[blueedge] (53-17) -- (43-07);
      \draw[blueedge] (63-27) -- (53-17);
      \draw[blueedge] (73-37) -- (63-27);
      \draw[rededge] (73-37) -- (72-36);
      \draw[rededge] (72-36) -- (71-35);
      \draw[rededge] (71-35) -- (70-34);
      \draw[rededge] (70-34) -- (33);

      \draw[vertical] (11) -- (00);
      \draw[vertical] (21-*) -- (10-*);
      \draw[vertical] (31-*) -- (20-*);
      \draw[vertical] (41-05) -- (30-*);
      \draw[vertical] (51-15) -- node[left] {$\e\ $} (40-04);
      \draw[vertical] (50-14) -- (*-03);
      \draw[vertical] (*-13) -- (*-02);
      \draw[vertical] (*-12) -- (*-01);

      \draw[vertical] (22) -- (11);
      \draw[vertical] (32-*) -- (21-*);
      \draw[vertical] (42-06) -- (31-*);
      \draw[vertical] (52-16) -- (41-05);
      \draw[vertical] (62-26) -- node[left] {$\e\ $} (51-15);
      \draw[vertical] (61-25) -- (50-14);
      \draw[vertical] (60-24) -- (*-13);
      \draw[vertical] (*-23) -- (*-12);

      \draw[vertical] (33) -- (22);
      \draw[vertical] (43-07) -- (32-*);
      \draw[vertical] (53-17) -- (42-06);
      \draw[vertical] (63-27) -- (52-16);
      \draw[vertical] (73-37) -- node[left] {$\e\ $} (62-26);
      \draw[vertical] (72-36) -- (61-25);
      \draw[vertical] (71-35) -- (60-24);
      \draw[vertical] (70-34) -- (*-23);

      \draw (-7,3) node (BotLeft) {};
      \draw (0,7) node (TopRight) {};
      \draw[monomials] (BotLeft) -- (TopRight);

    \end{tikzpicture}
    \caption{Structure of $q$-Harmonic polynomials of $G(4,2)$.  For
      short, the $q$-harmonic binomial $\tp{d}{4} \xy{\a}{\b} -
      \tp{a}{4} \xy{\a'}{\b'}$ is denoted ``$\a\b$-$\a'\b'$''. Similarly, the
      $q$-harmonic monomial $\xy{\a}{\b}$ is denoted ``$\a\b$'',
      ``$\a\b$-$\ast$'', or ``$\ast$-$\a\b$''. The blue (resp. red) arrows
      denote the action of the would be $q$-analogues of the operators
      $\p_x$ and $\p_y$ within each layer $L_i$. The gray
      arrows denote the action of the operator $\e = e_2(\PX)
      = \p_x\p_y$ (recall that it changes the value of
      $q$).  The green line separates the $q$-harmonic monomials and
      binomials.}

  \end{figure}

\begin{corollary}
  For $W=G(m,p,2)$ and $q \neq -a/b, 1 \leqslant a \leqslant 2
  \leqslant b$, the space $\qHW{W}$ is isomorphic as a graded
  $W$-module to $\HW$. Its basis is obtained by considering the layers
  $L_0(q),\ldots,L_{m/p-1}(q)$ of the $q$-harmonic polynomials for
  $G(m,2)$.
\end{corollary}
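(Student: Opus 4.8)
The plan is to derive the corollary from the preceding proposition through the identity $\qHW{G(m,p,2)} = \qHW{G(m,2)}\cap\mathrm{Ker}(\e^{m/p})$ of Section~\ref{section.Gmpn}, with $\e=\p_x\p_y$. First I would take the explicit basis~\eqref{la_base} of $\qHW{G(m,2)}$ and sort it into the layers of Figure~\ref{layers_graph}: a monomial $\xy{\a}{\b}$ (with $0\le\a,\b<m$) lies in $L_{\min(\a,\b)}$, and the binomial indexed by $(\a,\b)$ lies in $L_{\max(\a,\b)}$. A direct count gives $\dim L_k=(2m-2k-1)+(2k+1)=2m$ for each $k$, so that $L_0\oplus\cdots\oplus L_{m/p-1}$ has dimension $2m\cdot(m/p)=2m^2/p=|G(m,p,2)|=\dim\HW$. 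Using $\e^{m/p}(\xy{\a}{\b})=\frac{\a!\,\b!}{(\a-m/p)!(\b-m/p)!}\,\xy{\a-m/p}{\b-m/p}$, which vanishes exactly when $\min(\a,\b)<m/p$, a monomial is annihilated iff its layer index $\min(\a,\b)$ is below $m/p$, and the binomial indexed by $(\a,\b)$ is annihilated iff $\max(\a,\b)<m/p$, that is, again iff its layer index is below $m/p$. This already yields the inclusion $L_0\oplus\cdots\oplus L_{m/p-1}\subseteq\qHW{G(m,p,2)}$ asserted in the statement.

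The crux is the reverse inclusion: that $\e^{m/p}$ has no further kernel inside $\qHW{G(m,2)}$, equivalently that it is injective on the span of the surviving basis vectors (those in layers $L_k$ with $k\ge m/p$). I would establish this by tracking monomial supports. Writing $s=m/p$, the image of a surviving monomial is a single monomial with both exponents in $[0,m-s)$, whereas the image of a surviving binomial is supported on monomials having at least one exponent in $[m-s,m)$ or in $[m,2m-s)$. Separating the three ranges $[0,m-s)$, $[m-s,m)$, $[m,2m-s)$ in each variable exhibits a triangular pattern in which every surviving vector carries a monomial carried by no other, forcing the images to be linearly independent. Injectivity then gives $\dim\qHW{G(m,p,2)}=2m^2/p$, so the inclusion above is an equality. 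The argument uses that the leading coefficients $\tp{\a+m}{m}$ and $\tp{\b+m}{m}$ do not vanish, and these degenerate precisely at the excluded values $q=-a/b$ with $1\le a\le 2\le b$; conceptually, this injectivity is the $n=2$ instance of the expectation of Section~\ref{section.Gmpn} that $\e$ restricts to an isomorphism $L_k(q)\to L_{k-1}(q/(1+q))$ (the gray arrows of Figure~\ref{layers_graph}), and one could alternatively iterate that isomorphism, at the cost of tracking the changing value of $q$.

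For the graded $W$-module statement, I would note that $\qHW{G(m,p,2)}$ is a graded $W$-submodule of $\qHW{G(m,2)}$, being the kernel of $\e^{m/p}=e_2(\PX)^{m/p}$, an operator equivariant up to a linear character. Its basis consists of homogeneous vectors whose degrees are independent of $q$, so its Hilbert series equals the one obtained at $q=0$; and at $q=0$ the construction degenerates to the classical harmonics, whose decomposition into the layers $L_k(0)$ identifies this with the Hilbert series of $\HW$ for $G(m,p,2)$. Finally, the specialization argument recalled in the introduction shows that, in each degree, the dimension of the $q$-harmonic space is at most that of $\HW$, with equality in all degrees precisely when the graded $W$-module isomorphism holds; the equality of Hilbert series just obtained supplies this, completing the proof.

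I expect the main obstacle to be the injectivity step of the second paragraph: organizing the monomial supports of the binomial images so that the triangularity is transparent, and confirming that the leading coefficients degenerate exactly at the stated singular values and nowhere else.
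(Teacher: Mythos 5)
Your proposal is correct and follows essentially the same route as the paper: the paper's proof is a one-liner that simply selects from the basis~\eqref{la_base} the elements annihilated by $\e^{m/p}$ (equivalently, the layers $L_0(q),\dots,L_{m/p-1}(q)$, cf.\ the vertical arrows of Figure~\ref{layers_graph}). The one point you rightly flag as the crux --- that $\e^{m/p}$ is injective on the span of the remaining basis elements, so that the selection exhausts $\qHW{G(m,2)}\cap\mathrm{Ker}(\e^{m/p})$ --- is left entirely implicit in the paper, and your disjoint-support argument (images of surviving monomials have both exponents below $m-m/p$, images of surviving binomials have an exponent at least $m-m/p$, with pairwise distinct supports) does close that gap.
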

\begin{proof}
  Select out of Equation~\eqref{la_base} the elements which satisfy
  the extra equation $\e^{m/p}(f) = 0\,$. See also, in
  Figure~\ref{layers_graph}, the vertical arrows which depict the
  action of $\e$.
\end{proof}



\TODO{un seul Wood en ref?}

\bibliographystyle{alpha}

\bibliography{main}

\end{document}